\newtheorem{theorem}{Theorem}
\newtheorem{corollary}{Corollary}
\newtheorem{lemma}{Lemma}
\newtheorem{definition}{Definition}
\begin{document}

\title{Minimal number of points on a grid forming line
segments of equal length}

\date{August 25, 2016\\Latest update: May 12, 2022}
\author{Chai Wah Wu\\IBM Research AI\\IBM T. J. Watson Research Center\\P. O. Box 218, Yorktown Heights, New York 10598, USA\\e-mail: cwwu@us.ibm.com}
\maketitle

\begin{abstract}
We consider the minimal number of points on a regular grid on the plane that generates $n$ line segments of points of exactly length $k$. We illustrate how this is related to the $n$-queens problem on the toroidal chessboard and show that this number is upper bounded by $kn/3$ and approaches $kn/4$ as $n\rightarrow\infty$ when $k+1$ is coprime with $6$ or when $k$ is large.
\end{abstract}

\section{Introduction}
We consider points on a regular grid on the plane which form horizontal, vertical or diagonal line segments of exactly  $k$ points \footnote{We use the convention that an isolated point corresponds to $4$ line segments of length $1$; a horizontal, a vertical and 2 diagonal line segments.}.   For example, the set of 12 points in Fig. \ref{fig:one} form many line segments and form exactly 3 (overlapping) line segments of length $5$. Note that since a line segment of length $k$ consists of exactly $k$ points and no more\footnote{This implies that two line segments of the same orientation (horizontal, vertical or diagonal) must not overlap.}, the set of points in Fig. \ref{fig:one} contains 4 line segments of length 2 and does not contain any line segments of length $4$ or of length $3$.   Our motivation for studying this problem is the Bingo-4 problem proposed by Sun et al.  and described in OEIS\cite{OEIS} sequence \href{http://oeis.org/A273916}{A273916} where  the case $k=4$ is considered. This problem
 can be considered a type of orchard-planting problem \cite{burr:orchard:1974} restricted to a grid.  
 
Let $a_k(n)$ denote the minimal number of points needed to form $n$ line segments of length $k$. Fig. \ref{fig:one} shows that $a_5(3) = 12$ as any constellation of $11$ points will not generate $3$ segments of length $5$. Note that the constellation of points achieving $a_k(n)$ is typically not unique. Finding the exact value of $a_k(n)$ appears to be difficult and currently not feasible for large $n$. The purpose of this note is to provide an analysis on the asymptotic behavior of $a_k(n)$.

\begin{figure}[htbp]
\centerline{\includegraphics[width=3in]{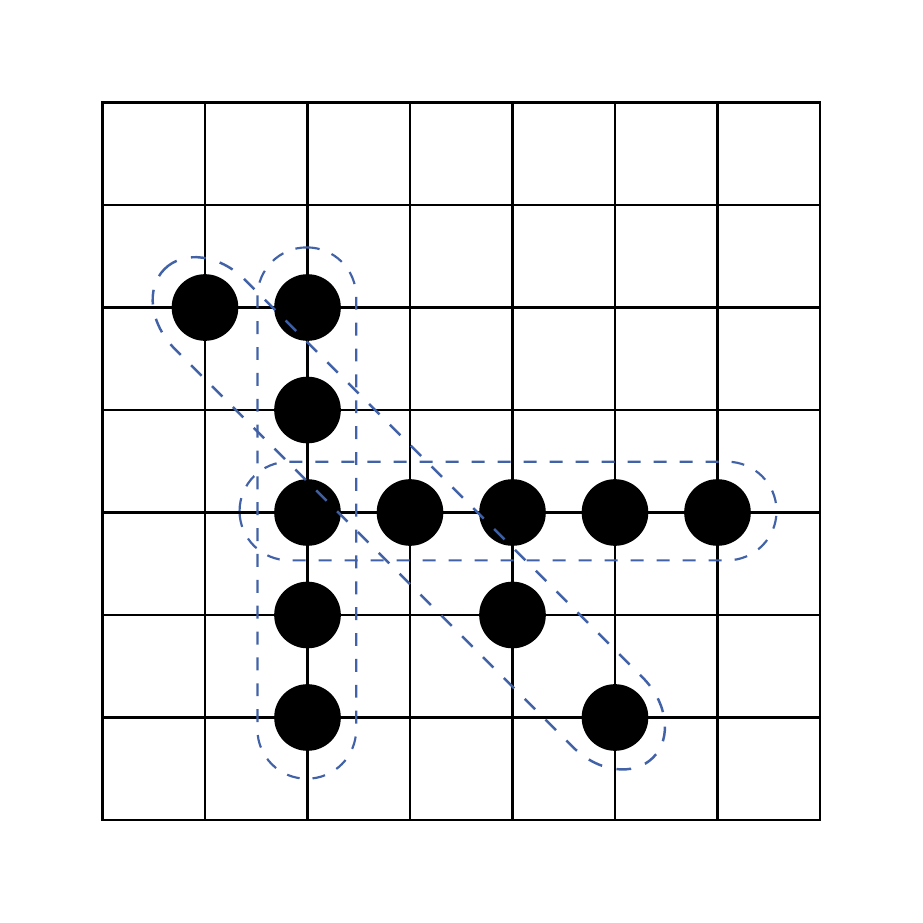}}
\caption{A constellation of 12 points on a grid. Among the line segments formed by the points there are 3 (overlapping) line segments of length 5.}\label{fig:one}
\end{figure}

\section{Bounds and asymptotic behavior of $a_k(n)$}
It is easy to show that $a_k(1) = k$, $a_k(2) = 2k-1$ and $a_k(3) = 3(k-1)$ as $2$ line segments overlap in at most one point and $3$ line segments overlaps in at most $3$ points, as illustrated in Fig. \ref{fig:smalln} for $k=5$. Note that $a_k(3)$ can be obtained with points forming a right isosceles triangle. 
\begin{lemma}[Fekete's subadditive Lemma \cite{fekete:subadditive:1923}]
If the sequence $a(n)$ is subadditive, i.e. $a(n+m) \leq a(n)+a(m)$, then $\lim_{n\rightarrow\infty}\frac{a_n}{n}$ exists and is equal to $\inf_n \frac{a_n}{n}$.
\label{lem:fekete}
\end{lemma}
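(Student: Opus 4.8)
The plan is to prove the limit equals the infimum directly. Set $L = \inf_{n} \frac{a(n)}{n}$, which a priori lies in $[-\infty,\infty)$. Since every term $\frac{a(n)}{n}$ is at least $L$, we automatically have $\liminf_{n\rightarrow\infty}\frac{a(n)}{n} \geq L$, so the entire content of the lemma reduces to the matching upper bound $\limsup_{n\rightarrow\infty}\frac{a(n)}{n}\leq L$; once both are in hand the limit exists and equals $L$.

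First I would fix an arbitrary $m$ and exploit subadditivity along multiples of $m$. Applying $a(n+m)\leq a(n)+a(m)$ inductively gives $a(qm)\leq q\,a(m)$ for every positive integer $q$. For a general large $n$ I would invoke the division algorithm to write $n = qm + r$ with $0\leq r < m$, and then apply subadditivity once more to peel off the remainder, obtaining $a(n) \leq q\,a(m) + a(r)$ (interpreting the last term as $0$ when $r=0$). Dividing by $n$ yields
\[
\frac{a(n)}{n} \leq \frac{qm}{n}\cdot\frac{a(m)}{m} + \frac{a(r)}{n}.
\]

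The crux is then a careful limit as $n\rightarrow\infty$ with $m$ held fixed. As $n$ grows, $q\rightarrow\infty$ and the coefficient $\frac{qm}{n} = \frac{n-r}{n}\rightarrow 1$, while $a(r)$ ranges only over the finite set $\{a(0),\dots,a(m-1)\}$ and is therefore bounded, so $\frac{a(r)}{n}\rightarrow 0$. Taking $\limsup$ gives $\limsup_{n\rightarrow\infty}\frac{a(n)}{n}\leq \frac{a(m)}{m}$. Since $m$ was arbitrary, I may take the infimum over $m$ on the right to conclude $\limsup_{n\rightarrow\infty}\frac{a(n)}{n}\leq L$, which combined with the trivial lower bound finishes the proof. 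The same inequality also handles the case $L=-\infty$: for every real $M$ there is an $m$ with $\frac{a(m)}{m}<M$, forcing $\limsup\frac{a(n)}{n}<M$ and hence the limit to be $-\infty$.

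The main obstacle I anticipate is purely the bookkeeping around the remainder term: one must be certain that the boundedness of $a(r)$ over the finitely many residues is precisely what kills the $\frac{a(r)}{n}$ contribution, and that $\frac{qm}{n}\rightarrow 1$ rather than merely staying bounded, so that the leading term tends exactly to $\frac{a(m)}{m}$ and not to something larger. Everything else is a routine consequence of the definition of the infimum.
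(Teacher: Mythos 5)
Your proof is correct, and there is nothing in the paper to compare it against: the paper states Fekete's Lemma with only a citation to Fekete's 1923 article and supplies no proof, using the lemma as a black box in Theorem \ref{thm:bound}. What you give is the standard division-algorithm argument, and you have handled the two points where careless write-ups go wrong: you note that $\frac{qm}{n}\rightarrow 1$ (not merely $\frac{qm}{n}\leq 1$), which is exactly what is needed so that the leading term tends to $\frac{a(m)}{m}$ even when $\frac{a(m)}{m}<0$, and you treat the case $L=-\infty$ separately, where the conclusion is that the limit is $-\infty$ rather than a real number. The only cosmetic slip is your mention of $a(0)$: for a sequence indexed from $n=1$ the remainder term ranges over $\{0, a(1),\dots,a(m-1)\}$ with the convention you already adopted for $r=0$, and this set is finite and bounded just as your argument requires, so nothing breaks.
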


\begin{figure}[htbp]
     \centering
     \begin{subfigure}[b]{0.3\textwidth}
         \centering
         \includegraphics[width=\textwidth]{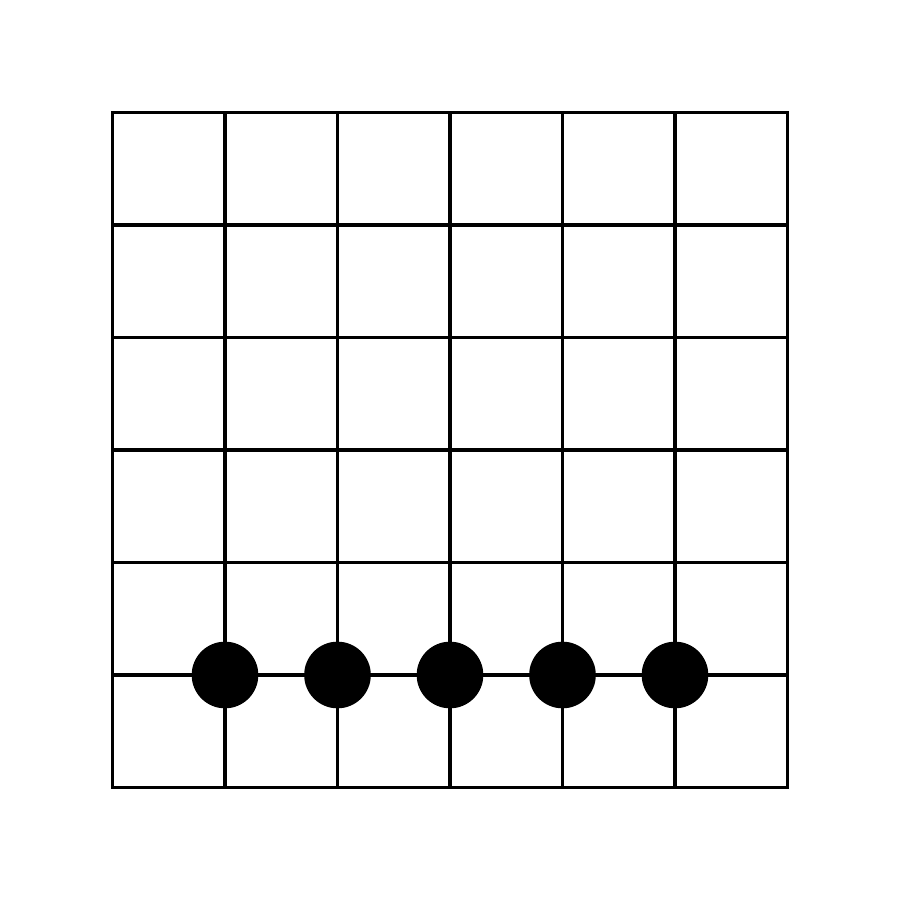}
         \caption{$a_5(1) = 5$}
         \label{fig:fig_n=1}
     \end{subfigure}
     \hfill
     \begin{subfigure}[b]{0.3\textwidth}
         \centering
         \includegraphics[width=\textwidth]{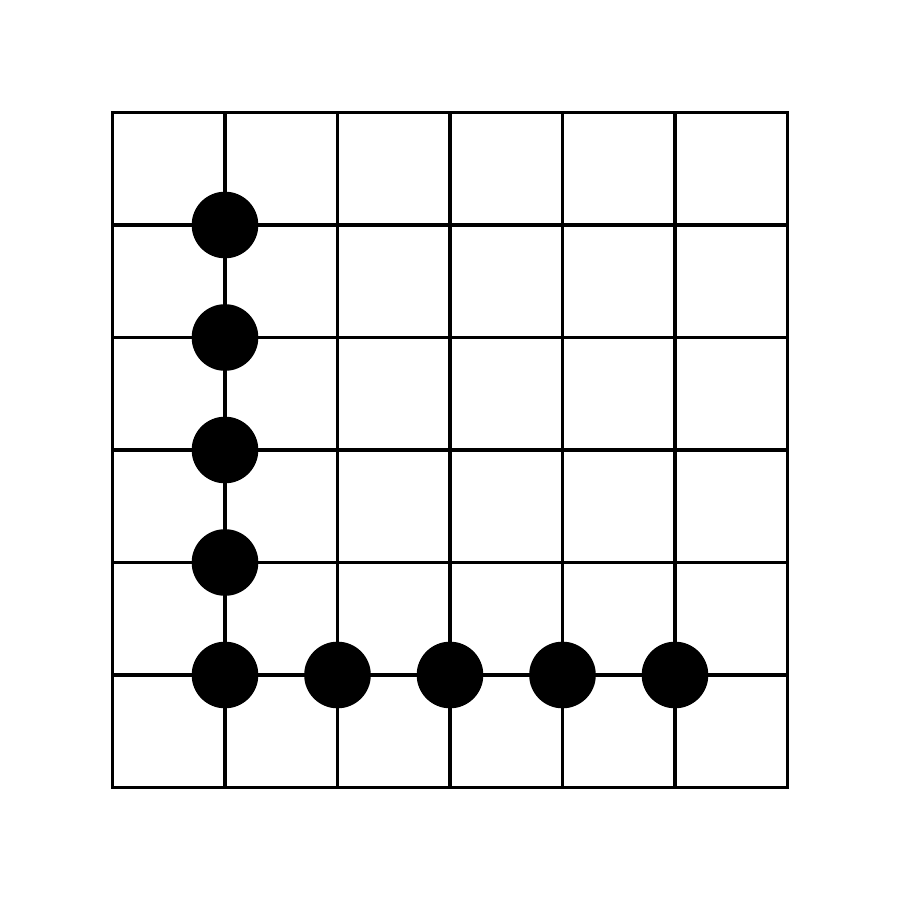}
         \caption{$a_5(2) = 9$}
         \label{fig:fig_n=2}
     \end{subfigure}
     \hfill
     \begin{subfigure}[b]{0.3\textwidth}
         \centering
         \includegraphics[width=\textwidth]{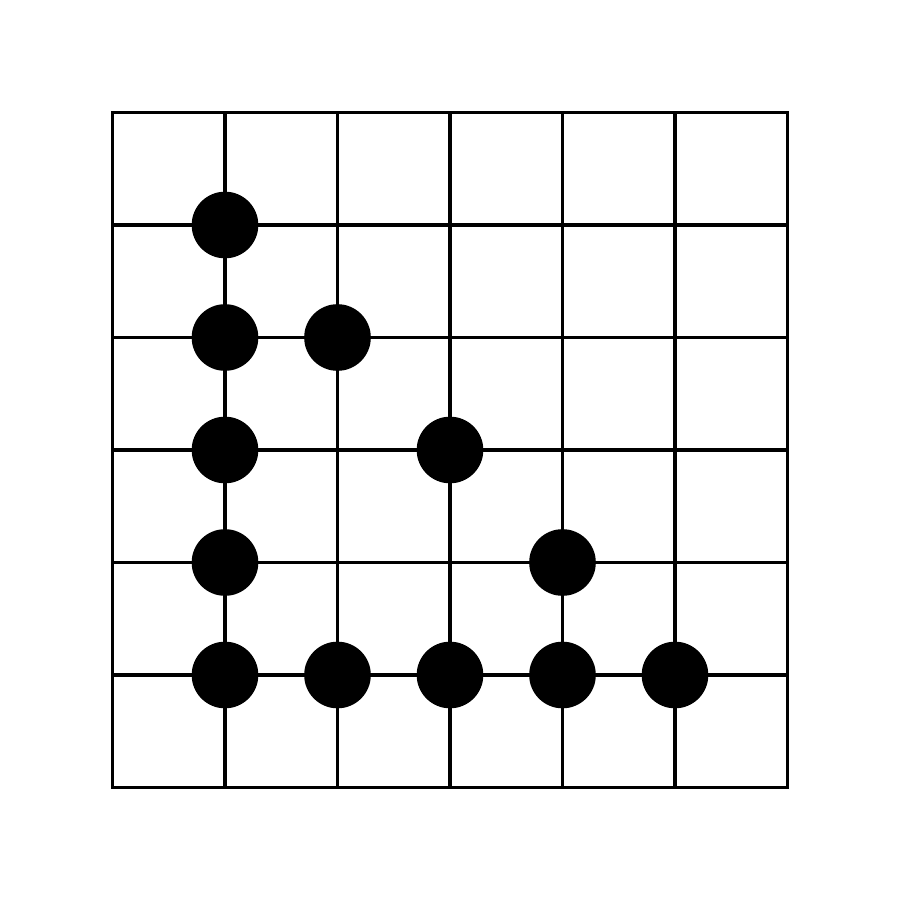}
         \caption{$a_5(3) = 12$}
         \label{fig:fig_n=3}
     \end{subfigure}
        \caption{Sets of points illustrating $a_k(n)$ for $n=1,2,3$.}
        \label{fig:smalln}
\end{figure}

\begin{theorem}
For all $k$, $a_k(n)$ is subadditive, and $f(k) = \lim_{n\rightarrow\infty}\frac{a_k(n)}{n} $ exists and satisfies $\frac{k}{4}\leq f(k) \leq \frac{k}{3}$.
\label{thm:bound}
\end{theorem}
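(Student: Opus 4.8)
The plan is to split Theorem~\ref{thm:bound} into three tasks: establishing subadditivity, then the upper bound $f(k)\le k/3$, and finally the lower bound $f(k)\ge k/4$. Let me sketch each.

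For subadditivity, I would argue geometrically. Given an optimal configuration realizing $a_k(n)$ patterns and another realizing $a_k(m)$ patterns, the idea is to place the two configurations far apart on the grid—say by translating the second one so far away horizontally that no block of the first interacts with any block of the second, and so that no spurious new length-$k$ block is created by their juxtaposition. Since the grid is infinite, we can always separate them by enough empty rows and columns that the combined configuration has exactly $n+m$ patterns of length $k$, using $a_k(n)+a_k(m)$ points. This gives $a_k(n+m)\le a_k(n)+a_k(m)$, and Lemma~\ref{lem:fekete} then yields existence of $f(k)=\inf a_k(n)/n$.

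For the upper bound, subadditivity and the infimum characterization mean it suffices to exhibit \emph{one} value of $n$ with $a_k(n)/n\le k/3$; the cleanest is to use $a_k(3)=3(k-1)$, giving $a_k(3)/3=(k-1)<k/3$\dots no, that is $k-1$, too large. So instead I would look for a periodic ``efficient'' configuration: a repeating tile containing, say, $3$ patterns that reuses endpoints of blocks as shared points, so that the marginal cost per pattern tends to $k/3$. The natural construction packs blocks along diagonals in a pattern where each added length-$k$ block shares roughly one endpoint with existing structure, contributing about $k-1$ new points but amortized down; arranging three blocks around a shared structure so that $n$ patterns cost about $(k/3)\,n + O(1)$ points gives $f(k)\le k/3$ via the infimum. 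The explicit tile is the engineering step here.

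The lower bound $f(k)\ge k/4$ is where the connection to the toroidal $n$-queens problem advertised in the abstract should enter, and I expect this to be the \textbf{main obstacle}. The rough reason $k/4$ appears: a single point can be the endpoint of at most four blocks (one horizontal, one vertical, two diagonal), so each point can be ``shared'' by a bounded number of patterns, and a counting/charging argument bounding how many length-$k$ patterns a set of $P$ points can generate should give an inequality of the form (number of patterns)$\le 4P/k + o(P)$, i.e. $P\ge (k/4)\,n - o(n)$, hence $f(k)\ge k/4$. Making this precise requires a careful charging scheme: assign to each pattern a ``share'' of its constituent points so that no point is overcharged, which is exactly where the four-directions-per-point structure (mirroring queen moves on a torus) must be exploited to avoid double counting. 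The delicate part is handling interior points of blocks, which belong to only one pattern, versus endpoints, and verifying that the amortized cost cannot beat $k/4$; I would set up a linear-programming-style counting bound and argue the worst case asymptotically matches $k/4$.
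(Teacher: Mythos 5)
Your subadditivity step (translating optimal configurations far apart so no spurious blocks arise) is exactly the paper's argument, and combined with Lemma \ref{lem:fekete} it correctly gives existence of $f(k)=\inf_n a_k(n)/n$. But both bounds as you sketch them have genuine gaps. For the upper bound, the mechanism you describe cannot work: if each new block shares only ``roughly one endpoint'' with existing structure, each pattern costs about $k-1$ new points, and no amortization reduces a per-pattern cost of $k-1$ down to $k/3$. To reach $k/3$ you need every point to lie in about \emph{three} patterns simultaneously, not merely to have endpoints reused. The paper's construction achieving this is a completely filled $k\times m$ rectangle of $km$ points with $k\le m$: every column is a vertical pattern ($m$ of them), every full-length diagonal in each of the two directions is a pattern ($m-k+1$ each), and for $m>k$ no horizontal pattern occurs since rows have length $m\neq k$. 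This gives $a_k(3m-2k+2)\le km$, and since $f(k)$ is the infimum of $a_k(n)/n$, letting $m\to\infty$ yields $f(k)\le k/3$. Your proposal leaves precisely this tile --- the entire content of the upper bound --- as an unfinished ``engineering step.''

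For the lower bound you have the right germ but then talk yourself into difficulties that do not exist, partly because of a factual error: an \emph{interior} point of a horizontal block is not confined to one pattern --- it can simultaneously be an interior point of a vertical block and of two diagonal blocks, and indeed the extremal constellations later in the paper have most points lying in exactly four patterns. The correct observation is simply that every point lies on one horizontal, one vertical, and two diagonal lines, hence belongs to at most $4$ patterns of length $k$ (regardless of whether it is an endpoint or interior). Double-counting point--pattern incidences in any configuration with $P$ points and $n$ patterns then gives $kn\le 4P$, i.e. $a_k(n)\ge kn/4$ exactly --- no charging scheme, no $o(n)$ error term, and no linear-programming bound. Your expectation that the toroidal queens connection enters here is also misplaced: the modular $(k+1)$-queens problem is used only later, to construct \emph{upper}-bound configurations showing that $k/4$ is attained when $k+1$ is coprime with $6$; the lower bound in Theorem \ref{thm:bound} is a one-line count.
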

\begin{proof}
Since each line segment takes $k$ points and each point can be part of at most $4$ line segments (horizontal, vertical or diagonal), $a_k(n) \geq \frac{kn}{4}$.
Since the set of points for $a_k(n)$ and $a_k(m)$ separated apart leads to $m+n$ line segments of length $k$, it is clear that $a_k(n)$ is subadditive. Lemma \ref{lem:fekete} implies that $f(k)$ exists and is equal to $\inf_n \frac{a_k(n)}{n}$.
Consider a $k$ by $m$ rectangular array of points with $k\leq m$. There are $m$ vertical line segments and $m-k+1$ diagonal
line segments of each orientation and thus there are $3m-2k+2$ length $k$ line segments. This shows that $a_k(3m-2k+2)\leq km$ which implies that
$\frac{k}{4}\leq f(k) \leq \frac{k}{3}$. 
\end{proof}

\section{Constellations where each point is part of 4 different line segments}
The upper bound $\frac{k}{3}$ on $f(k)$ in Theorem \ref{thm:bound} shows that for large $n$ we can construct a constellation of $n$ points such that most points are part of $3$ different line segments. Is it possible to construct a constellation such that most points are part of $4$ different line segments (a horizontal, a vertical and two diagonal line segments) and thus achieve the lower bound $\frac{k}{4}$? The case $k=1$ is simple. Since $a_1(4n) = n$ as exhibited by the constellation of $n$ isolated points, this implies that $f(1) = \frac{1}{4}$.

Let $\sigma\in S_{k+1}$ be a permutation on the integers $\{0,1,\cdots , k\}$. Consider a $k+1$ by $k+1$ square grid and place a point on each position $(i,j)$ except when it is of the form $(i,\sigma(i))$. It is clear that tiling this grid on the plane results in a constellation where every point is part of a horizontal and a vertical line segment of length $k$. The shear maps
$(i,j) \rightarrow (i,i+j)$ and $(i,j)\rightarrow (i,i-j)$ map the two diagonal line segments to a vertical line segment. Thus 
in order to also have every point be part of two diagonal line segments of exactly $k$ points, we want
$\{i+\sigma(i) \mod (k+1)\}$ and $\{i-\sigma(i) \mod (k+1)\}$ to be permutations of  $\{0,1,\cdots , k\}$ as well.  If this is the case, consider a $N$ by $N$ subgrid of this tiling and let $n$ be the number of points in this subgrid.  Except for points near the edges which is on the order of $kN \propto k\sqrt{n}$, all points belong to $4$ line segments of length $k$.  Thus we have proved the following:
\begin{theorem} \label{thm:perm}
If there is a  permutation $\sigma$ of the numbers $\{0,1,\cdots ,k\}$ such that
$\sigma_1 = \{i+\sigma(i) \mod (k+1)\}$ and $\sigma_2 = \{i-\sigma(i) \mod (k+1)\}$ are both permutations, then $f(k) = \frac{k}{4}$. In particular, $\frac{a_k(n)}{n}$ converges to $f(k)$ on the order of 
$O\left(\frac{1}{\sqrt{n}}\right)$. 
\end{theorem}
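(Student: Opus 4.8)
The plan is to prove the upper bound $f(k)\le\frac{k}{4}$, matching the lower bound already supplied by Theorem \ref{thm:bound}, by turning the informal counting in the paragraph preceding the statement into a precise estimate. First I would fix the periodic tiling generated by $\sigma$ and confirm that all four families of lines carry only length-$k$ blocks. Since $\sigma$ is a permutation, each row and each column of the fundamental $(k+1)\times(k+1)$ tile contains exactly one empty position, so under tiling every horizontal and vertical line is an infinite string of length-$k$ blocks separated by single gaps. A diagonal of slope $+1$ meets an empty position $(i,\sigma(i))$ exactly when $i-\sigma(i)$ takes a prescribed residue modulo $k+1$; the hypothesis that $\sigma_2=\{i-\sigma(i)\bmod k+1\}$ is a permutation guarantees exactly one gap per period on each such diagonal, and symmetrically $\sigma_1$ controls the slope $-1$ diagonals. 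Hence every line in all four directions decomposes into blocks of exactly $k$ points.

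Next I would count within one fundamental domain: it holds $(k+1)^2-(k+1)=k(k+1)$ points, and each of its $k+1$ empty positions contributes one gap, hence one length-$k$ block, to each of its four lines, giving $4(k+1)$ patterns per tile. The asymptotic point and pattern densities are therefore $\frac{k(k+1)}{(k+1)^2}=\frac{k}{k+1}$ and $\frac{4(k+1)}{(k+1)^2}=\frac{4}{k+1}$ respectively, so the point-to-pattern ratio in the infinite tiling is exactly $\frac{k(k+1)}{4(k+1)}=\frac{k}{4}$.

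Then I would pass to a finite $N\times N$ window, with point count $P$ and with $n$ the number of length-$k$ patterns it realizes. The leading terms are $P=\frac{k}{k+1}N^2+O(kN)$ and $n=\frac{4}{k+1}N^2+O(kN)$, where the corrections collect the boundary effects: only points lying within a strip of width $O(k)$ of the edge can belong to a block truncated or altered by the window, and such a strip contains $O(kN)$ points. Because the window is itself a valid constellation, $a_k(n)\le P$, so $\frac{a_k(n)}{n}\le\frac{P}{n}=\frac{k}{4}+O\!\left(\frac{k}{N}\right)$. Since $n=\Theta(N^2)$ gives $N=\Theta(\sqrt n)$, this reads $\frac{a_k(n)}{n}=\frac{k}{4}+O\!\left(\frac{1}{\sqrt n}\right)$ along this family of windows; combined with the bound $\frac{a_k(n)}{n}\ge\frac{k}{4}$ from Theorem \ref{thm:bound} and the identity $f(k)=\inf_n\frac{a_k(n)}{n}$, this forces $f(k)=\frac{k}{4}$.

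I expect the delicate step to be the boundary bookkeeping: I must check that cutting the infinite tiling down to a finite window perturbs the pattern count by only $O(kN)$, verifying in particular that truncation neither spawns spurious length-$k$ blocks nor destroys interior ones, so that both error terms above are genuinely controlled. To upgrade the $O(1/\sqrt n)$ rate from the discrete window values to every $n$, I would interpolate between consecutive achievable counts $n_N$ (which are spaced $O(\sqrt n)$ apart) using subadditivity, writing $a_k(n)\le a_k(n_N)+a_k(n-n_N)$ with $n-n_N=O(\sqrt n)$ and bounding the second term by $O(k\sqrt n)$ via the rectangular construction of Theorem \ref{thm:bound}.
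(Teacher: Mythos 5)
Your proposal follows essentially the same route as the paper: tile the plane with the $(k+1)\times(k+1)$ pattern determined by $\sigma$, use the permutation hypotheses on $\sigma_1$ and $\sigma_2$ to ensure all four directions decompose into length-$k$ blocks, and pass to an $N\times N$ window where boundary effects of size $O(kN)\propto k\sqrt{n}$ give $\frac{a_k(n)}{n}\to\frac{k}{4}$ at rate $O(1/\sqrt{n})$, matched against the lower bound from Theorem \ref{thm:bound}. Your treatment is in fact somewhat more careful than the paper's (explicit per-tile pattern count of $4(k+1)$, and the subadditive interpolation extending the $O(1/\sqrt{n})$ rate from the window values to all $n$), but it is the same argument, correctly executed.
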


If $\sigma$ satisfies the conditions of Theorem \ref{thm:perm}, then so does $\sigma^{-1}$.  For a fixed integer $m$, the permutation
$\sigma(i) + m \mod (k+1)$ also satisfies these conditions. We will use this to partition the set of admissible permutations into equivalent classes. More specifically,
\begin{definition}
Let $S_{k+1}$ be the set of permutations on $\{0,1,\cdots , k\}$.  $T_{k+1}\subset S_{k+1}$ is defined as the set of permutations $\sigma$ such that  $\{i+\sigma(i)\mod (k+1)\}$ and $\{i-\sigma(i)\mod (k+1)\}$ are in $S_{k+1}$.  The equivalence relation $\sim$ is defined on $T_{k+1}$ as follows. If $\sigma, \tau \in T_{k+1}$, then $\sigma \sim \tau$ if $\tau = \sigma^{-1}$ or
there exist an integer $m$ such that $\sigma(i) = \tau(i) + m \mod (k+1)$ for all $i$.
\end{definition}

Thus Theorem \ref{thm:perm} implies that if $T_{k+1}\neq \varnothing$, then $f(k) = \frac{k}{4}$.

\section{Modular $n$-queens problem}
In this section we show that the above constellation is related to an $n$-queens problem on a toroidal chessboard.
The $n$-queens problem asks whether $n$ nonattacking queens can be placed on an $n$ by $n$ chessboard.  The answer is yes and is first shown by Pauls \cite{pauls:nqueens:1874,bell:nqueens:2009}.
Next consider a toroidal $n$ by $n$ chessboard, where the top edge is connected to the bottom edge and the left edge is connected to the right edge. The corresponding $n$-queens problem is called a {\em modular} $n$-queens problem.
For the $k+1$ by $k+1$ square grid above, if we put a queen on each position $(i,\sigma(i))$, then 
it is easy to see that $\sigma\in T_{k+1}$ if and only if it provides a solution to the modular $(k+1)$-queens problem.  
For instance, for $k=4$, consider the permutation $\sigma = (0,2,4,1,3)$. Figure \ref{fig:k=4} shows a $5$ by $5$ grid where all the points are part of $4$ line segments if the grid tiles the plane (or equivalently, the grid lives on a torus). This means that each point in the center of a finite tiling are part of $4$ line segments. If we put a queen on each of the $5$ empty locations, we obtain a solution to the modular $5$-queens problem.

\begin{figure}[htbp]
\centerline{\includegraphics[width=3in]{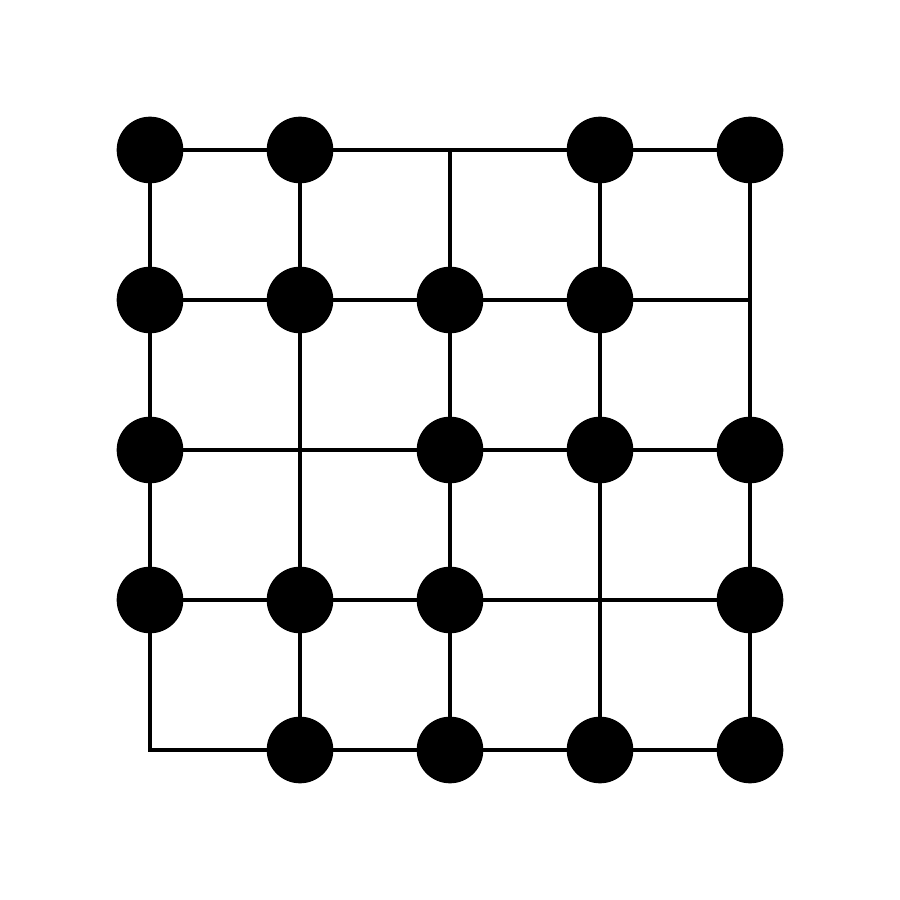}}
\caption{Points where the empty locations are of coordinates $(i,\sigma(i))$. Putting a queen at each empty location results in a solution to the modular $5$-queen problem.}\label{fig:k=4}
\end{figure}

P\'{o}lya \cite{polya:nqueens:1918} showed that a solution to the modular $n$-queens problem exists if and only if $n$ is coprime with $6$.
Thus P\'{o}lya's result is equivalent to the following:
\begin{theorem}\label{thm:polyaT}
$T_{k+1} \neq \varnothing$ if and only if $k+1$ is coprime with $6$.
\end{theorem}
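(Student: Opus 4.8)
The plan is to reduce the statement to Polya's theorem by making the correspondence announced just above the theorem fully explicit, and then, for self-containedness, to sketch a direct proof of the coprimality criterion. Writing $n = k+1$, I would first record that a placement of $n$ nonattacking queens on the toroidal board is the same datum as a map $\sigma:\{0,\dots,k\}\to\{0,\dots,k\}$ recording, for each row $i$, the column $\sigma(i)$ of the queen in that row. Having exactly one queen per column forces $\sigma$ to be a permutation; requiring at most one queen on each of the two families of wrap-around diagonals forces $i\mapsto i+\sigma(i)\bmod n$ and $i\mapsto i-\sigma(i)\bmod n$ to be injective, hence (being self-maps of a finite set) permutations. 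These are exactly the defining conditions of $T_{k+1}$, so $T_{k+1}\neq\emptyset$ if and only if the modular $n$-queens problem is solvable, and Theorem~\ref{thm:polyaT} then follows from Polya's result. If one is content to cite Polya, the proof stops here.

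For a self-contained argument, the \emph{if} direction is easy: when $\gcd(n,6)=1$ I would exhibit the explicit solution $\sigma(i) = 2i\bmod n$. Here $\gcd(2,n)=1$ makes $\sigma$ a permutation, $i-\sigma(i)\equiv -i$ is always a permutation, and $i+\sigma(i)\equiv 3i$ is a permutation because $\gcd(3,n)=1$; hence $\sigma\in T_{k+1}$.

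The harder direction is \emph{only if}, namely $T_{k+1}=\emptyset$ whenever $2\mid n$ or $3\mid n$, and this is where I expect the real work. The idea is a moment argument. Because $\{i+\sigma(i)\bmod n\}$ is a permutation of the residues, summing gives $\sum_i(i+\sigma(i))\equiv \tfrac{n(n-1)}{2}\pmod n$, while the left side equals $n(n-1)\equiv 0$; this forces $\tfrac{n(n-1)}{2}\equiv 0\pmod n$, which fails for even $n$. The first moment cannot detect $3\mid n$, so I would pass to second moments: using that both $\{i\pm\sigma(i)\}$ are permutations together with $(i+\sigma(i))^2+(i-\sigma(i))^2 = 2i^2+2\sigma(i)^2$ yields $2\sum_j j^2\equiv 0\pmod n$, i.e. $\tfrac{(n-1)n(2n-1)}{3}\equiv 0\pmod n$; reducing $(n-1)(2n-1)\equiv 1$ modulo $n$ shows the left side equals $n/3$ when $3\mid n$, a contradiction. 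The main obstacle is handling these modular reductions cleanly — keeping track of the parity of $n$, the integrality of the half- and sixth-sums, and the distinction between a true integer value and its residue mod $n$ — rather than any conceptual difficulty; the two congruence obstructions together cover exactly the primes $2$ and $3$, which is precisely the content of coprimality with $6$.
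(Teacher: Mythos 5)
Your proposal is correct, and its first paragraph coincides with the paper's entire argument: the paper gives no proof of Theorem~\ref{thm:polyaT} beyond asserting that a permutation in $T_{k+1}$ ``corresponds to'' a solution of the modular $(k+1)$-queens problem and citing Polya; you make that dictionary explicit (one queen per row defines $\sigma$, column-nonattacking forces $\sigma$ to be a permutation, and avoiding the two families of wrap-around diagonals is exactly injectivity of $i\mapsto i\pm\sigma(i)\bmod n$), which is what the paper dismisses as clear. Where you genuinely diverge is the self-contained verification of Polya's criterion, and your sketch checks out on inspection. For sufficiency, $\sigma(i)=2i\bmod n$ works when $\gcd(n,6)=1$ since $2$ and $3$ are units mod $n$; note this is the same linear construction the paper obtains later via Theorem~\ref{thm:coprime} with slope $m=2$, which it uses to reprove Corollary~\ref{cor:one}. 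For necessity, your first-moment congruence forces $n(n-1)/2\equiv 0\pmod n$, which fails for $n=2m$ because $m(n-1)\equiv -m\not\equiv 0\pmod{2m}$; and your second-moment identity yields $2\sum_{j=0}^{n-1}j^2=(n-1)n(2n-1)/3\equiv 0\pmod n$, which fails when $3\mid n$ since, writing $n=3m$, one has $m(n-1)(2n-1)=m(2n^2-3n+1)\equiv m\pmod n$ (the terms $2mn^2$ and $3mn=n^2$ vanish mod $n$), a nonzero residue. These moment congruences are essentially Polya's original impossibility argument, so your route amounts to inlining the cited result rather than replacing it with something structurally new. The trade-off: the paper's citation keeps its survey-style brevity, while your version makes the note self-contained, exhibits an explicit element of $T_{k+1}$, and makes the sufficiency direction constructive; the modular bookkeeping you flag as the main obstacle does go through as sketched.
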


\begin{corollary}\label{cor:one}
If $k+1$ is coprime with $6$, then $f(k) = \frac{k}{4}$.
\end{corollary}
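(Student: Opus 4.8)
The plan is to deduce the corollary directly from the chain Theorem \ref{thm:polyaT} $\Rightarrow$ nonemptiness of $T_{k+1}$ $\Rightarrow$ Theorem \ref{thm:perm}, since all of the substantive work has already been carried out upstream. First I would assume that $k+1$ is coprime with $6$. By Theorem \ref{thm:polyaT}, this hypothesis is exactly what guarantees that $T_{k+1} \neq \emptyset$, so I can fix some permutation $\sigma \in T_{k+1}$ and work with it concretely.

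Next I would unpack the definition of $T_{k+1}$ to confirm that this $\sigma$ satisfies precisely the hypotheses of Theorem \ref{thm:perm}. By definition, membership in $T_{k+1}$ asserts that both $\{i+\sigma(i)\bmod k+1\}$ and $\{i-\sigma(i)\bmod k+1\}$ lie in $S_{k+1}$, i.e.\ are permutations of $\{0,1,\ldots,k\}$. These are exactly the objects $\sigma_1$ and $\sigma_2$ named in the statement of Theorem \ref{thm:perm}, so the hypotheses match verbatim and no separate verification is required. Applying Theorem \ref{thm:perm} to this $\sigma$ then yields $f(k) = \frac{k}{4}$ immediately; combined with the lower bound $f(k) \geq \frac{k}{4}$ already furnished by Theorem \ref{thm:bound}, this pins the value down exactly rather than merely bounding it.

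The main obstacle is essentially nonexistent at the level of this corollary: the genuine difficulty has been pushed into Theorem \ref{thm:polyaT}, which rests on Polya's classical characterization of when the modular $n$-queens problem is solvable, and into Theorem \ref{thm:perm}, whose tiling construction forces all but an $O(k\sqrt{n})$ boundary layer of points to belong to four patterns simultaneously. The only points demanding care are the bookkeeping identification between the definition of $T_{k+1}$ and the hypotheses of Theorem \ref{thm:perm}, and the observation that the coprimality condition must be read off of $k+1$ rather than $k$. Given these, the corollary is a one-line composition of the two preceding theorems.
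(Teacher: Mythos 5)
Your proposal is correct and matches the paper's own (implicit) argument exactly: the paper states ``if $T_{k+1}\neq\emptyset$, then $f(k)=\frac{k}{4}$'' and then invokes Theorem \ref{thm:polyaT}, which is precisely your chain Theorem \ref{thm:polyaT} $\Rightarrow$ $T_{k+1}\neq\emptyset$ $\Rightarrow$ Theorem \ref{thm:perm}. Your extra appeal to the lower bound from Theorem \ref{thm:bound} is harmless but redundant, since Theorem \ref{thm:perm} already asserts the equality $f(k)=\frac{k}{4}$.
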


Monsky \cite{monsky:nqueens:1989} shows that $n-2$ nonattacking queens can be placed on an $n$ by $n$ toroidal chess board and $n-1$ queens can be placed if
$n$ is not divisible by $3$ or $4$.
This implies the following which shows that for $k$ large, $f(k)$ approaches the lower bound $\frac{k}{4}$:

\begin{theorem}\label{cor:upperbound}
$f(k) \leq \frac{k(k+1)+2}{4(k-1)}$. If $k+1$ is not divisible by $3$ or $4$, then
$f(k) \leq \frac{k(k+1)+1}{4k}$.
\end{theorem}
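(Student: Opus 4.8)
The plan is to generalize the tiling construction behind Theorem \ref{thm:perm}. There, a full modular $(k+1)$-queens solution supplies one ``hole'' in every row, column, and diagonal, so that every line becomes a block of length exactly $k$ and the resulting point-to-block ratio is exactly $k/4$. When $k+1$ is not coprime with $6$ no full solution exists, but Monsky's theorem still provides a large partial solution: $q = k-1$ nonattacking queens on the $(k+1)\times(k+1)$ toroidal board in general, and $q = k$ when $k+1$ is not divisible by $3$ or $4$. I would take such a partial solution as the set of holes in an otherwise full $(k+1)\times(k+1)$ fundamental domain and tile the plane.

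First I would verify the block count. Because the $q$ queens are nonattacking, they occupy $q$ distinct rows, $q$ distinct columns, and $q$ distinct diagonals in each of the two directions, with at most one queen on any such line. Each toroidal diagonal is a single cycle of length $k+1$ (stepping by $(1,1)$ or $(1,-1)$ returns to the start after exactly $k+1$ steps), so a line carrying exactly one hole produces, under tiling, an infinite run of points broken at spacing $k+1$, i.e. a block of length exactly $k$, whereas a line carrying no hole becomes a hole-free infinite line contributing no finite block. Hence each fundamental domain yields exactly $q$ horizontal, $q$ vertical, and $2q$ diagonal blocks, a total of $4q$ blocks, while containing $(k+1)^2 - q$ points.

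Then I would pass to a large $N \times N$ window and argue, exactly as in Theorem \ref{thm:perm}, that all but an $O(k\sqrt{n})$ portion of the points lie on complete blocks, so that the number of points $P$ and the number of length-$k$ blocks $B$ satisfy $P/B \to \frac{(k+1)^2 - q}{4q}$ and therefore $f(k) \le \frac{(k+1)^2 - q}{4q}$. Substituting $q = k-1$ gives $\frac{k(k+1)+2}{4(k-1)}$, and $q = k$ gives $\frac{k(k+1)+1}{4k}$, which are the two claimed bounds. The step I expect to require the most care is controlling the hole-free lines near the boundary: in a finite window such a line is truncated into a long run rather than an infinite line, so I must confirm that it never contributes a spurious length-$k$ block and that all such boundary discrepancies stay within the $O(k\sqrt{n})$ edge term and hence vanish in the limit.
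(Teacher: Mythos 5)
Your proposal is correct and follows essentially the same route as the paper: both take Monsky's partial toroidal solutions with $k+1-r$ nonattacking queens ($r=2$ in general, $r=1$ when $k+1$ is not divisible by $3$ or $4$) as the holes of a tiled $(k+1)\times(k+1)$ fundamental domain, observe that each queen contributes exactly $4$ length-$k$ blocks, and let the point-to-block ratio $\frac{(k+1)^2-(k+1-r)}{4(k+1-r)}$ bound $f(k)$ from above. Your treatment is in fact more careful than the paper's terse proof, explicitly verifying the toroidal diagonal cycle structure and the harmlessness of hole-free lines and boundary truncations.
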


\begin{proof}
Consider a $k+1$ by $k+1$ array with $k+1-r$ nonattacking queens. By placing a point only on the locations where there are no queens we obtain a constellation with $(k+1)^2-(k+1-r)$ points.
Each point then is part of $4$ line segments of length $k$.  Thus  when this array is tiled, we get for a large number of points a ratio $\frac{a_k(n)}{n}$ approaching
$\frac{(k+1)^2-(k+1-r)}{4(k+1-r)} = \frac{k(k+1)+r}{4(k+1-r)}$.  The conclusion follows by setting $r =1 $ or $r= 2$. 
\end{proof}

\begin{corollary}
$\lim_{k\rightarrow \infty} \frac{f(k)}{k} = \frac{1}{4}$.
\end{corollary}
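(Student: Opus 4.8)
The plan is to prove the result by a squeeze (sandwich) argument, combining the universal lower bound from Theorem~\ref{thm:bound} with the sharpened upper bound from Theorem~\ref{cor:upperbound}. First I would recall that Theorem~\ref{thm:bound} gives $f(k)\geq\frac{k}{4}$ for every $k$, so that $\frac{f(k)}{k}\geq\frac{1}{4}$ holds for all $k$; this immediately controls the liminf of the sequence from below.

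For the matching upper bound, I would note that the crude estimate $f(k)\leq\frac{k}{3}$ from Theorem~\ref{thm:bound} is of no use here, since it only yields $\frac{f(k)}{k}\leq\frac{1}{3}$. The essential input is instead Theorem~\ref{cor:upperbound}, which (via Monsky's toroidal queens result) supplies $f(k)\leq\frac{k(k+1)+2}{4(k-1)}$. Dividing through by $k$ gives
$$\frac{f(k)}{k}\leq\frac{k(k+1)+2}{4k(k-1)}=\frac{k^{2}+k+2}{4k^{2}-4k}.$$

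I would then let $k\to\infty$ in this rational expression: since the numerator and denominator are both quadratic with leading coefficients $1$ and $4$, the right-hand side converges to $\frac{1}{4}$. Sandwiching $\frac{f(k)}{k}$ between the constant lower bound $\frac{1}{4}$ and this upper bound tending to $\frac{1}{4}$, the squeeze theorem yields $\lim_{k\to\infty}\frac{f(k)}{k}=\frac{1}{4}$.

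I do not anticipate any genuine obstacle beyond having the right ingredient in place: the only conceptual point is recognizing that the elementary bound $\frac{k}{3}$ is too weak, and that one must invoke the near-optimal toroidal queen packing encoded in Theorem~\ref{cor:upperbound} to drive the upper bound all the way down to the lower bound $\frac{1}{4}$. Once that bound is granted, the remainder is a routine limit of a ratio of quadratics.
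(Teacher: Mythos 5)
Your proposal is correct and matches the paper's intended argument: the corollary is stated without proof precisely because it follows by the squeeze you describe, combining the lower bound $f(k)\geq\frac{k}{4}$ from Theorem~\ref{thm:bound} with the upper bound $f(k)\leq\frac{k(k+1)+2}{4(k-1)}$ from Theorem~\ref{cor:upperbound}. Your algebra and limit computation are both accurate, so nothing further is needed.
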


\subsection{Lattice construction}
As in the $n$-queens problem, we can construct permutations in $T_{k+1}$ via a lattice construction.

\begin{definition}
Given two vectors $v_1$ and $v_2$, the lattice construction is defined as a constellation of points such that 
a point is on the grid if and only if the point is not a linear combination of $v_1$ and $v_2$.
\end{definition}
For instance with the lattice points generated by the vectors $(1,2)$ and $(2,-1)$, the set of points with $N=15$ is shown in Fig. \ref{fig:two}.
In particular, this configuration shows that $f(4) = 1$.

\begin{figure}[htbp]
\centerline{\includegraphics[width=5in]{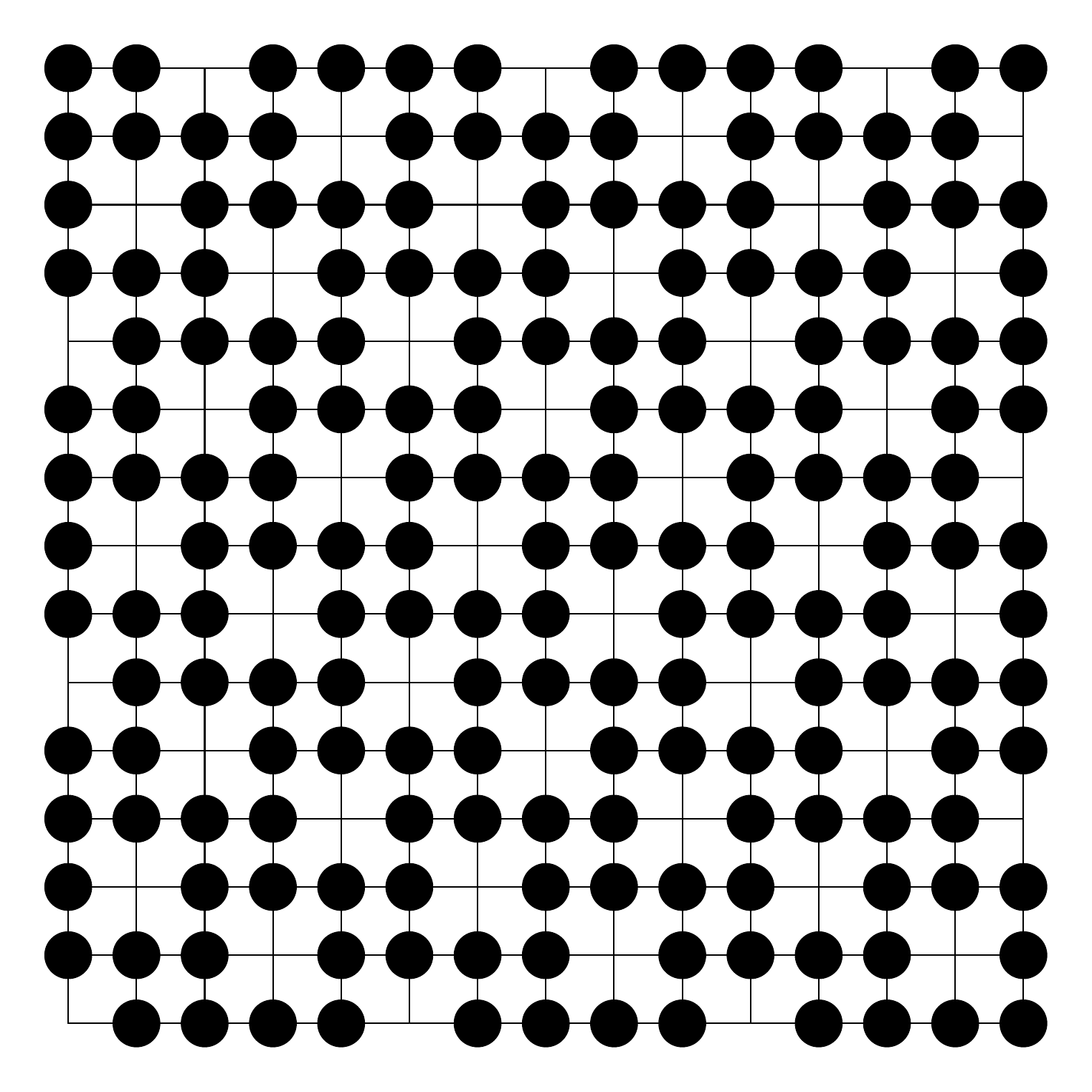}}
\caption{A lattice constellation. Points in the center of the grid are part of $4$ different patterns, showing that $\frac{a_{4}(n)}{n} \rightarrow 1$ as $n\rightarrow \infty$.}\label{fig:two}
\end{figure}

The following result appears to be well-known \cite{bell:nqueens:2009}, but we include it here for completeness.
\begin{theorem} \label{thm:coprime}
If there exists $1 <m < k$ such that $m-1$, $m $ and $m+1$ are all coprime with $k+1$, then the lattice construction with $v_1 = (1,m)$ and $v_2 = (0,k+1)$ corresponds to a permutation $\sigma$ in $T_{k+1}$.
\end{theorem}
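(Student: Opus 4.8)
The plan is to make the lattice construction explicit, read off the resulting permutation $\sigma$, and then verify the two defining conditions of $T_{k+1}$ by reducing each of them to a coprimality statement about linear maps on $\mathbb{Z}/(k+1)\mathbb{Z}$.

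First I would write down the lattice $L$ spanned by $v_1 = (1,m)$ and $v_2 = (k+1,0)$ explicitly: $L = \{a v_1 + b v_2 : a,b \in \mathbb{Z}\} = \{(a + b(k+1),\, am) : a,b \in \mathbb{Z}\}$. Passing to the toroidal $(k+1)\times(k+1)$ board, that is, reducing both coordinates modulo $k+1$, the point $a v_1 + b v_2$ has $x$-coordinate $\equiv a$ and $y$-coordinate $\equiv am$. Hence, as $a$ ranges over $\{0,1,\dots,k\}$, the lattice meets column $i$ in exactly one cell, namely the one in row $mi \bmod (k+1)$. Since the constellation places its empty cells (equivalently, the queen positions) exactly on the lattice, reading off the occupied row in each column defines the candidate map $\sigma(i) = mi \bmod (k+1)$, with queens at the positions $(i,\sigma(i))$.

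Next I would isolate the single fact that drives everything: for any integer $c$, the map $i \mapsto ci \bmod (k+1)$ is a permutation of $\{0,1,\dots,k\}$ if and only if $\gcd(c,k+1)=1$. This is standard—multiplication by $c$ is a bijection of $\mathbb{Z}/(k+1)\mathbb{Z}$ exactly when $c$ is a unit—and I would state it as a one-line lemma or invoke it inline. With $\sigma(i)=mi$, the three sequences to be checked are $\sigma(i) = mi$, $\sigma_1(i) = i+\sigma(i) = (m+1)i$, and $\sigma_2(i) = i-\sigma(i) = (1-m)i$, all taken modulo $k+1$. Applying the lemma three times: $\sigma \in S_{k+1}$ requires $\gcd(m,k+1)=1$; $\sigma_1 \in S_{k+1}$ requires $\gcd(m+1,k+1)=1$; and $\sigma_2 \in S_{k+1}$ requires $\gcd(1-m,k+1)=\gcd(m-1,k+1)=1$, the gcd being unchanged under negation. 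These are precisely the hypotheses that $m-1$, $m$, and $m+1$ are each coprime with $k+1$, so $\sigma \in T_{k+1}$ and the proof is complete.

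The only genuinely delicate step is the bookkeeping in the second paragraph—confirming that $v_2=(k+1,0)$ merely effects the horizontal wraparound, so that the toroidal lattice really does collapse to the single linear form $\sigma(i)=mi \bmod (k+1)$. Once that identification is pinned down, the remainder is a routine application of the unit criterion, with no further obstacle.
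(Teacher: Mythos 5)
Your proof is correct and takes essentially the same route as the paper: both read off the permutation $\sigma(i) = mi \bmod (k+1)$ from the lattice and reduce the three membership conditions for $T_{k+1}$ to coprimality of $m$, $m-1$, and $m+1$ with $k+1$ --- the paper simply phrases your unit-criterion checks for $\sigma_1$ and $\sigma_2$ geometrically, as the statements that a lattice point on a main diagonal forces $(m-1)a \equiv 0 \pmod{k+1}$ or $(m+1)a \equiv 0 \pmod{k+1}$ and hence $a \equiv 0 \pmod{k+1}$. The ``delicate step'' you flag is real but resolves exactly as you suggest: the paper's own proof (and its Figure 3) actually uses $v_2 = (0,k+1)$ rather than the $(k+1,0)$ of the statement, and since both choices yield the same residue classes $\{(i, mi)\}$ modulo $k+1$, your toroidal reduction recovers the intended construction.
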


\begin{proof}
Consider the lattice construction generated by $(1,m)$ and $(0,k+1)$. If $m$ is coprime with $k+1$, then
$(m,2m,\cdots, (k+1)m) \mod (k+1)$ is a permutation $\sigma$ in $S_{k+1}$ and thus
 we find in a $k+1$ by $k+1$ subarray
empty locations of the form $(i,\sigma(i))$.  
$i+\sigma(i) \equiv (m+1)i \mod (k+1)$ and $\{i+\sigma(i)\mod (k+1)\}$ is again a permutation since $m+1$ and $k+1$ are coprime.
Similarly, $i-\sigma(i) \equiv -(m-1)i \mod (k+1)$ and $\{i-\sigma(i)\mod (k+1)\}$ is a permutation since $m-1$ and $k+1$ are coprime. Thus the conditions of Theorem \ref{thm:perm} are satisfied and the conclusion follows.
\end{proof}

Theorem \ref{thm:coprime} also provides a proof of Corollary \ref{cor:one} since if $k+1$ is coprime with $6$, then $1$, $2$ and $3$ are all coprime with $k+1$ and we can choose $m=2$.  In particular the lattice construction with $v_1 = (1,2)$ and $v_2 = (0,k+1)$ generates a permutation $\sigma$ in $T_{k+1}$. 
Fig. \ref{fig:three} shows the construction for $k= 12$.

\begin{figure}[htbp]
\centerline{\includegraphics[width=5in]{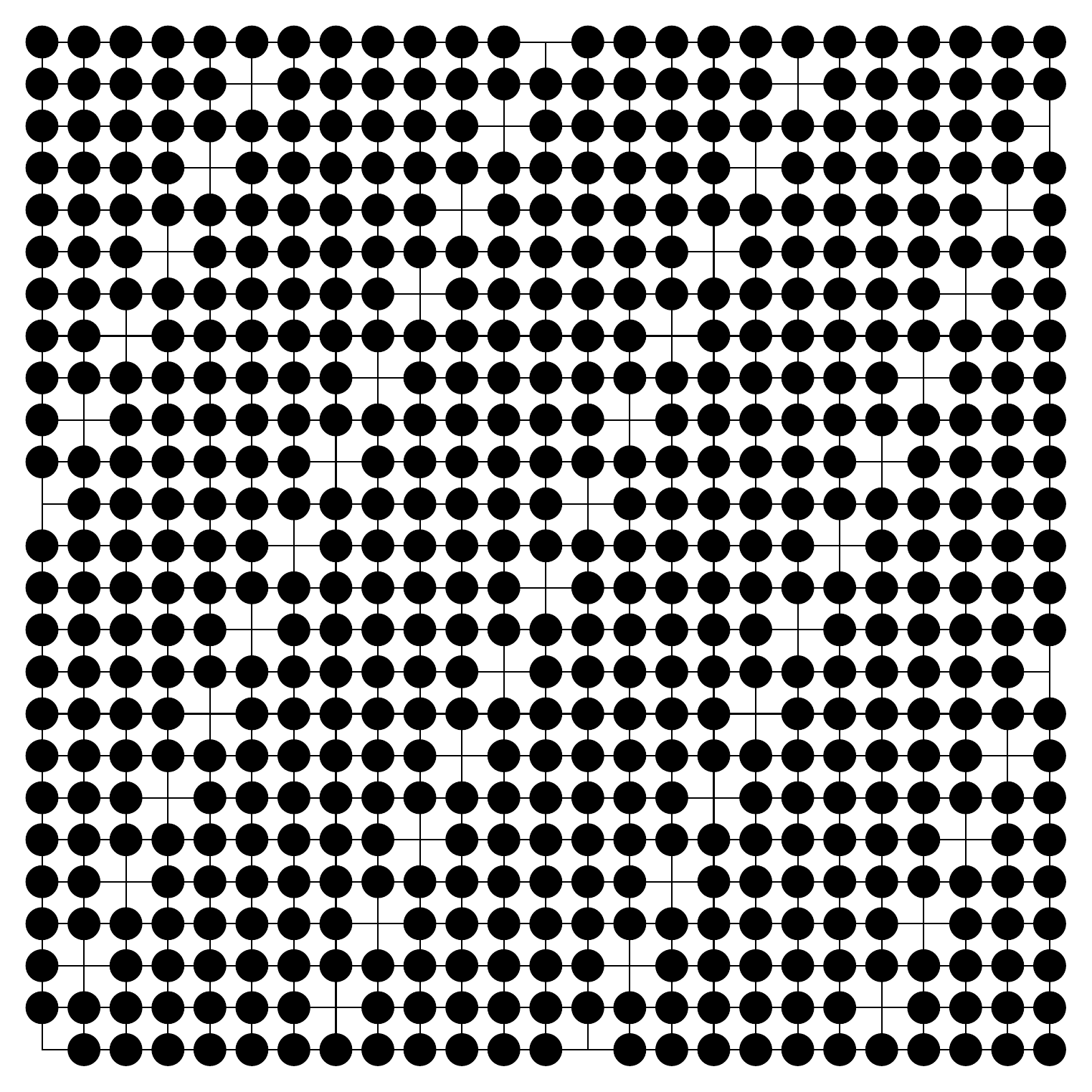}}
\caption{A lattice constellation for $k=12$ generated by vectors $(1,2)$ and $(0,13)$.}\label{fig:three}
\end{figure}

For $k=4$,
there is only one equivalence class $(0,2,4,1,3)$ in $T_{k+1}$ that satisfies the conditions of Theorem \ref{thm:perm}.  For $k=6$, there are two equivalent classes
$(0,2,4,6,1,3,5)$ and $(0,3,6,2,5,1,4)$. For $k=10$, there are $4$ equivalent classes.
In particular, Theorem \ref{thm:coprime} shows that if $k+1 > 4$ is prime, then there are at least $\frac{k-2}{2}$ equivalent classes in $T_{k+1}$. This is because each $2\leq m \leq k-1$ is coprime with $k+1$ and the permutation generated by $m$ is the inverse of the permutation generated by $k-1-m$ which are equivalent\footnote{For general $k$, see \cite{burger:nqueens:2004} for a formula of the number of such permutations.}. It is possible to have more than  $\frac{k-2}{2}$ equivalent classes as there are permutations in $T_{k+1}$ not generated by a lattice.
For $k+1$ coprime with $6$, if $k = 4, 6$ and $10$, all permutations in $T_{k+1}$ are generated by a lattice.  For $k = 12$, there are permutations in $T_{k+1}$ that are not generated by a lattice.  One such example is shown in Fig. \ref{fig:k12}. Such solutions are referred to as {\em nonlinear} solutions \cite{bell:nqueens:2009}. 

\begin{figure}[htbp]
\centerline{\includegraphics[width=5in]{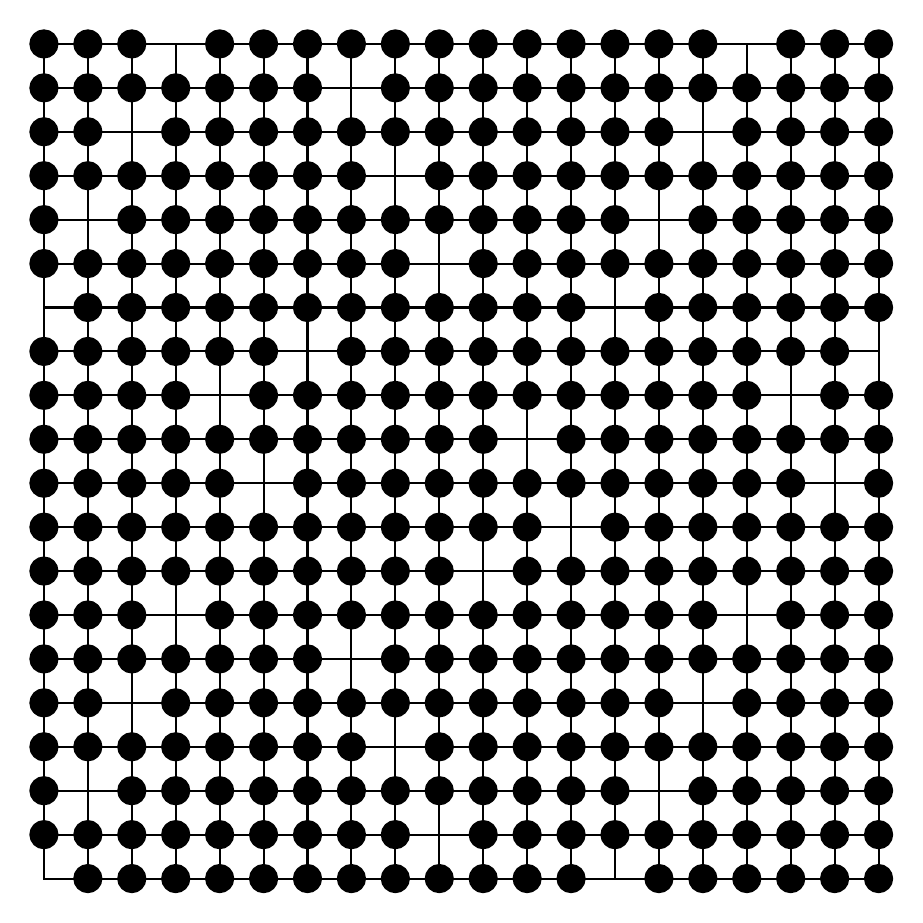}}
\caption{A constellation for $k=12$ not generated by a lattice corresponding to the permutation $(0,2,4,6,11,9,12,5,3,1,7,10,8)$.}\label{fig:k12}
\end{figure}

\section{Conclusions}
We studied the asymptotic behavior of the minimal number of points needed to generate $n$ line segments of length $k$ using a construction based on permutations of
$\{0,1,\cdots, k\}$ with certain properties.  We showed that this construction allows us to create constellations of points where asympotically most points are part of 4 line segments. This construction is equivalent to the modular $(k+1)$-queens problem and thus $f(k)=\frac{k}{4}$ for $k+1$ coprime with $6$. If $k+1$ is even or $k+1$ is divisible by $3$, this construction fails to provide such a constellation.
However, results in the modular $n$-queens problem can still provide an upper bound on $f(k)$ which shows that $\lim_{k\rightarrow\infty}\frac{f(k)}{k} = \frac{1}{4}$. Even though these constructions for the modular $n$-queens problem provide limiting value of $\frac{a_k(n)}{n}$ as $n\rightarrow \infty$, for a fixed $n$ the optimal constellation to achieve $a_k(n)$ can be quite different and difficult to compute (see for example \url{https://oeis.org/A273916/a273916.png}) .

\section{Acknowledgements}
We are indebted to Don Coppersmith for stimulating discussions and for providing many insights during the preparation of this note.

\end{document}